\def\homeo{hom\'eomorphisme}
\newcommand{\bbR}{{\mathbb{R}}}
\newcommand{\bbC}{{\mathbb{C}}}
\newcommand{\bbS}{{\mathbb{S}}}
    \def\cT{{\cal T}}
  \def\cI{{\cal I}} \def\cO{{\cal O}}
\def\homeo{\mathrm{Homeo}}
\def\?{$^{***}$\marginpar{?}}
\newtheorem{ques}{Question}
\newtheorem*{ques*}{Question}
\newtheorem*{prop*}{Proposition}
\newtheorem*{conj*}{Conjecture}
\newtheorem*{theo*}{Theorem}
\newtheorem{coro}{Corollary}[section]
\newtheorem{affi*}{Affirmation}
\newtheorem{lemm}[coro]{Lemma}
\newtheorem*{lemm*}{Lemma}
\def\?{\footnote{?}}
\newlength{\espaceavantspecialthm}
\newlength{\espaceapresspecialthm}
\title{On  closed subgroups of the group of homeomorphisms of a manifold}
\author{Fr\'ed\'eric Le Roux}
\begin{document}
\sloppy 

\maketitle

\begin{abstract}
Let $M$ be a  triangulable compact manifold. We prove that, among closed subgroups of 
$\homeo_{0}(M)$ (the identity component of the group of homeomorphisms of $M$), 
the subgroup consisting of volume preserving elements is maximal. 
\end{abstract}

\paragraph{AMS classification.} 
57S05 
(57M60, 
37E30). 

\section{Introduction}

The theory of of groups acting on the circle is  very rich  (see in particular the monographs~\cite{Ghys01,Navas07}). 
The theory is far less developed in higher dimension, where it seems difficult to discover more than some isolated islands in a sea of chaos. In this note, we are interested in the closed subgroups of  the group  $\homeo_{0}(M)$,  the identity component of the group of homeomorphisms of some compact topological $n$-dimensional manifold $M$. We will show that, when $n \geq 2$, for any
\emph{good} (nonatomic and with total support) probability measure $\mu$, the subgroup of elements that preserve $\mu$ is maximal among closed subgroups.

Let us recall some related results in the case when $M$ is the circle. De La Harpe conjectured that $PSL(2,\bbR)$ is a maximal closed subgroup (\cite{Bestvina}). Ghys  proposed a list of closed groups acting transitively, asking whether, up to conjugacy,  the list was complete (\cite{Ghys01}); the list consists in the whole group, $SO(2)$, $PSL(2,\bbR)$, the group  $\homeo_{k,0}(\bbS^1)$ of elements that commutes with some rotation of order $k$, and the group $PSL_{k}(2,\bbR)$ which is defined analogously. The first conjecture was solved by Giblin and Markovic in~\cite{GiMa06}. These authors also answered Ghys's question affirmatively, under the additional hypothesis that the group contains some non trivial arcwise connected component.
Thinking of the two-sphere with these results in mind, one is naturally led  to the following questions.
\begin{ques}
Let $G$ be a proper closed subgroup of $\homeo_{0}(\bbS^2)$ acting transitively. Assume that $G$ is not a (finite dimensional) Lie group. Is $G$ conjugate to one of the two subgroups: (1) the centralizer of the antipodal map $x \mapsto -x$,  (2) the subgroup of area-preserving elements?
\end{ques}
Note that the centralizer of the antipodal map is the group of lifts of homeomorphisms of the projective plane; it is the spherical analog of the groups $\homeo_{k,0}(\bbS^1)$.
\begin{ques}
Is $PSL(2,\bbC)$ maximal among closed subgroups of $\homeo_{0}(\bbS^2)$?
\end{ques}

On the circle the group of measure-preserving elements coincides with $SO(2)$. It is not a maximal closed subgroup since it is included in $PSL(2,\bbR)$. In contrast, we propose to prove that the closed  subgroup of area-preserving homeomorphisms of the two-sphere is maximal. To put this into a general context, let $M$ be a compact topological manifold whose dimension is greater or equal to $2$. We  assume that $M$ is triangulable and (for simplicity) without boundary. Let us equip $M$ with a  probability measure $\mu$ which is assumed to be \emph{good}: this means that every finite set has measure zero, and every non-empty open set has positive measure. 
We consider the group $\homeo_{0}(M)$ of homeomorphisms of $M$ that are isotopic to the identity, and the subgroup $\homeo_{0}(M,\mu)$ of elements that preserve the measure $\mu$. According to the famous Oxtoby-Ulam theorem (\cite{OxUl41,GoPe75}, see also~\cite{fathi80}), if $\mu'$ is another good probability measure on $M$ then it is homeomorphic to $\mu$, meaning that there exists an element $h \in \homeo_{0}(M)$ such that $h_{*}\mu=\mu'$. In particular the subgroup $\homeo_{0}(M,\mu')$ is isomorphic to $\homeo_{0}(M,\mu)$.
We equip these transformation groups with the topology of uniform convergence, which turns them into topological groups. The subgroup $\homeo_{0}(M,\mu)$ is easily seen to be closed in $\homeo_{0}(M)$. Note that according to Fathi's theorem (first theorem in \cite{fathi80}), $\homeo_{0}(M,\mu)$ coincides with the identity component in the group of measure preserving homeomorphisms.
The aim of the present note is to prove the following.
\begin{theo*}
The group $\homeo_{0}(M,\mu)$ is maximal among closed subgroups of 
the group $\homeo_{0}(M)$. 
\end{theo*}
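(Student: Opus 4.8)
The plan is to take a closed subgroup $G$ with $\homeo_0(M,\mu)\subsetneq G\subseteq\homeo_0(M)$, fix an element $g\in G$ with $g_*\mu\neq\mu$, and show $G=\homeo_0(M)$. The first move is a reduction to a statement about measures. Write $\cM$ for the space of good probability measures on $M$ with the weak-$*$ topology, and let the groups act on $\cM$ by pushforward. By the Oxtoby--Ulam theorem the action of $\homeo_0(M)$ is transitive, with stabiliser of $\mu$ exactly $\homeo_0(M,\mu)$. Now for any $f\in\homeo_0(M)$, if I can find $h\in G$ with $h_*\mu=f_*\mu$, then $f^{-1}h$ preserves $\mu$, hence lies in $\homeo_0(M,\mu)\subseteq G$, so $f\in G$. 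Thus everything reduces to proving that the orbit $O:=\{h_*\mu: h\in G\}$ equals all of $\cM$; note $O\neq\{\mu\}$ since $g_*\mu\in O$.

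The orbit $O$ is $G$-invariant and each element of $G$ acts as a homeomorphism of $\cM$. Since $\homeo_0(M)$ is a Polish group acting continuously and transitively on the Polish space $\cM$, the orbit map is open (Effros), so $\cM\cong\homeo_0(M)/\homeo_0(M,\mu)$ and, $G$ being closed and saturated, $O$ is closed in $\cM$. As $\cM$ is connected (it is convex), it suffices to prove that $O$ is also open. Because $G$ acts transitively on $O$ by homeomorphisms of $\cM$, $O$ contains a neighbourhood of each of its points as soon as it contains a neighbourhood of the single point $\mu$. So the whole theorem reduces to the local statement: \emph{$O$ contains a weak-$*$ neighbourhood of $\mu$}, i.e. every good measure close enough to $\mu$ is of the form $h_*\mu$ with $h\in G$.

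To build nearby measures I would manufacture, from $g$ and the measure-preserving group, elements of $G$ that move $\mu$ only inside a prescribed small ball. Set $\rho:=g^{-1}_*\mu\neq\mu$. For a small ball $U$ and any $h\in\homeo_0(M,\mu)$ supported in $U$, the commutator $\theta:=ghg^{-1}h^{-1}\in G$ satisfies $\theta_*\mu=g_*h_*\rho$ (using $h_*\mu=\mu$), and a direct check shows $\theta_*\mu$ agrees with $\mu$ outside the ball $V:=g(U)$ while redistributing mass inside $V$. It changes $\mu$ nontrivially precisely when $h_*\rho\neq\rho$, which can be arranged whenever $\rho$ is not proportional to $\mu$ on $U$: two subregions of $U$ of equal $\mu$-measure but different $\rho$-measure can then be interchanged by some measure-preserving $h$. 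Since $\rho\neq\mu$ and both are good, $\rho$ fails to be proportional to $\mu$ on some ball of arbitrarily small radius (otherwise proportionality would propagate by connectedness to give $\rho=\mu$). Hence for every $\varepsilon>0$ I obtain an element of $G$ whose pushforward differs from $\mu$ only on a ball of mass $<\varepsilon$: a nontrivial ``local redistribution'' lying in $O$, of arbitrarily small total variation, whose location can moreover be moved about by conjugation with $\homeo_0(M,\mu)$.

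It remains to pass from these particular small local redistributions to a full neighbourhood of $\mu$, and this is where I expect the real difficulty to lie. The natural strategy is: first, decompose a transport from $\mu$ to a nearby target $\nu$ into many redistributions each supported in a small ball, so that by composition (the orbit $O$ is a group orbit, hence stable under applying further elements of $G$) and by the closedness of $O$ it is enough to realise every single-ball redistribution; second, fix one ball $V$ and show that the measure-preserving homeomorphisms supported in $V$ together with one nontrivial gadget generate, in the closure, all redistributions of $\mu$ inside $V$. Viewing a measure through the law of its density $d\nu/d\mu$ clarifies the two ingredients: conjugation by $\homeo_0(M,\mu)$ realises any density with a prescribed distribution, while the gadget is what changes that distribution. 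The crux is therefore to show that the gadget-moves reach every distribution of mean one, equivalently that the generated redistributions span all admissible directions rather than a thin subset. I would attempt this by slicing $V$ as a product $I\times K$ and reducing to a one-dimensional transport problem along $I$, where redistributing a measure on an interval by a single nonconstant homeomorphism together with measure-preserving ones can be analysed explicitly and shown to approximate arbitrary interval homeomorphisms; feeding this back through the reductions above yields the neighbourhood of $\mu$, and with it the theorem.
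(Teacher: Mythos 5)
The first half of your proposal is sound, and its framing is genuinely different from the paper's: the paper never passes to the space of measures, but instead directly proves that the group generated by $\homeo_{0}(M,\mu)$ and one non-preserving element is dense in $\homeo_{0}(M)$ for the uniform topology, the closedness of $G$ then finishing the argument. Your alternative --- Effros's theorem to identify $\cM$ with the homogeneous space $\homeo_{0}(M)/\homeo_{0}(M,\mu)$ (this does require checking that $\cM$ is Polish, a $G_\delta$ in the space of all probability measures, and that $\homeo_{0}(M)$ is Polish, but both are true), closedness plus saturation of $G$ to make the orbit $O$ of $\mu$ closed, transitivity of $G$ on $O$ to reduce openness of $O$ to openness at the single point $\mu$, and connectedness of the convex set $\cM$ to conclude --- is a clean and correct reduction, with the pleasant feature that closedness of $O$ converts approximate realization of nearby measures into exact realization. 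Your commutator gadget $\theta=ghg^{-1}h^{-1}$, whose effect on $\mu$ is confined to the small ball $g(U)$, is also correct, and it plays the role of Lemma~\ref{lemma.small-support} of the paper.

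The gap is that the proof stops exactly where the theorem lives: you never prove that these localized redistributions generate, even in the weak-$*$ closure, a full neighbourhood of $\mu$, and you say yourself that this is where the real difficulty lies. Two ideas are needed at this point, and they are precisely the content of Lemma~\ref{lemma.interval}, Corollary~\ref{coro.transfer} and Section 3 of the paper. First, quantitative control of the transfer: the paper rotates a thin half-ball splitting of a ball through a continuous family of rotations and applies the intermediate value theorem to show that the set of achievable mass transfers contains an interval $[a^{-},a^{+}]$ with $a^{-}<0<a^{+}$; composing $N$ conjugated copies of a transfer of size $\varepsilon/N$ then moves \emph{exactly} a prescribed mass $\varepsilon$ from one prescribed region to an adjacent one. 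Without an argument of this kind, the set of redistributions you generate could a priori be thin (countable, say) --- this is exactly the objection you raise and do not resolve --- and your proposed substitute, slicing $V$ as $I\times K$ and solving a one-dimensional transport problem, is doubtful as stated, since measure-preserving homeomorphisms of an interval fixing its endpoints are trivial, so the ``measure-preserving ones'' you want to combine with the gadget do not exist in the sliced picture. Second, globalization: the paper fixes a triangulation whose $(n-1)$-skeleton is $\mu$-null and whose simplices have equal mass, uses exact transfers along chains of adjacent simplices to give the simplices any prescribed system of masses (namely those of their images under the target homeomorphism), and then corrects inside each simplex by Oxtoby--Ulam together with Alexander's trick. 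Some such scheme, or a genuine proof that your single-ball redistributions are weak-$*$ dense near $\mu$, is indispensable; as written, your argument establishes the reduction and the localization, but not the theorem.
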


In what follows we consider some element $f \in \homeo_{0}(M)$ that does not preserves the measure $\mu$, and we denote by $G_{f}$ the subgroup of $\homeo_{0}(M)$ generated by 
$$\{f \} \cup  \homeo_{0}(M,\mu).$$
Our aim is to show that the group $G_{f}$ is dense in $\homeo_{0}(M)$.

\section{Localization}
In this section we show how to find some element in $G_{f}$ that has small support and contracts the volume of some given ball.

\paragraph{Good balls} A \emph{ball} is any subset of $M$ which is homeomorphic to a euclidean  ball in $\bbR^n$, where $n$ is the dimension of $M$. 
We will need to consider balls which are locally flat and whose boundary has measure zero. More precisely, let us denote by $B_{r}(0)$ the euclidean ball with radius $r$ and center $0$ in $\bbR^n$. A ball $B$ will be called \emph{good} if $\mu(\partial B)=0$ and if there exists a topological embedding (continuous one-to-one map) $\gamma : B_{2}(0) \to M$ such that $\gamma(B_{1}(0)) = B$. Note that, due to countable additivity, if $\gamma : B_{1}(0) \to M$ is any topological embedding, then for almost every $r \in (0,1)$ the ball $\gamma(B_{r}(0))$ is good.

\bigskip	

\paragraph{Oxtoby-Ulam theorem} We will need the following consequence of the Oxtoby-Ulam theorem. Let $B_{1}, B_{2}$ be two good balls in the interior of some manifold $M'$, with or without boundary (what we have in mind is either $M'=M$ or $M'$ is a euclidean ball). Let $\mu'$ be a good probability measure on $M'$ which assigns measure zero to the boundary $\partial M'$. Denote by $\homeo_{0}(M',\mu')$ the identity component of the group of homeomorphisms of $M'$ which are supported in the interior of $M'$ and preserve $\mu'$. Assume $\mu'(B_{1}) = \mu'(B_{2})$. Then \emph{there exists $\phi \in \homeo_{0}(M',\mu')$ such that $\phi(B_{1}) = B_{2}$}.
 To construct $\phi$, we first choose a good ball $B$ in the interior of $M'$ that contains $B_{1}, B_{2}$ in its interior. According to the annulus theorem (\cite{Kirby69,Quinn82}), we may find a homeomorphism $\phi'$ supported in the ball $B$ that sends $B_{1}$ onto $B_{2}$\footnote{One may probably avoid the use of the annulus theorem here, since the ball $B$ may be constructed explicitly by gluing the two good balls $B_{1}$ and $B_{2}$ to a piecewise linear tube connecting them.}. A first use of  the Oxtoby-Ulam theorem provides a homeomorphism $\phi_{1}$ supported in $B_{2}$ and sending the measure $(\phi'_{*}\mu')_{\mid B_{2}}$ to the measure $\mu'_{\mid B_{2}}$. A second use of the same theorem gives a homeomorphism $\phi_{2}$ supported in $B \setminus B_{2}$ and sending the measure $(\phi'_{*}\mu')_{\mid B \setminus B_{2}}$ to the measure $\mu'_{\mid B \setminus  B_{2}}$. Then $\phi$ is obtained as $\phi_{2} \phi_{1} \phi'$. Note that, since $\phi$ is supported in the ball $B$, Alexander's trick (\cite{Alexander23}) provides an isotopy from the identity to $\phi$ within the homeomorphisms of $B$ that preserves the measure $\mu'$, which shows that $\phi$ belongs to $\homeo_{0}(M',\mu')$.

\bigskip	

\paragraph{Triangulations} We will also need triangulations which have good properties with respect to the measure $\mu$. 
We begin with any triangulation $\cT$ of $M$. We would like the $(n-1)$-skeleton of $\cT$ to have measure zero, but some $(n-1)$-dimensional simplices may have positive measure. We fix this as follows. Each $n$-dimensional  simplex $s$ of $\cT$ is homeomorphic to the standard $n$-dimensional simplex ; let $\mu_{s}$ be a probability measure on $s$ which is the homeomorphic image of the Lebesgue measure on the standard simplex. The measure
$$
\mu' = \frac{1}{N} \sum \mu_{s}
$$
(where $N$ denotes the number of  $n$-dimensional  simplices of $\cT$) is a good probability measure on $M$ for which the $n-1$-dimensional simplices have measure zero.
 We apply the Oxtoby-Ulam theorem to get a homeomorphism $h$ of $M$ sending $\mu'$ to $\mu$. Then we consider the image triangulation $\cT_{0} = h_{*}(\cT)$, whose $(n-1)$-skeleton has measure zero. In addition to this, all the simplices of $\cT_{0}$ have the same mass. Using successive barycentric subdivisions we get a sequence $(\cT_{p})_{p \geq 0}$ of nested triangulations with both properties: the $(n-1)$-skeleton have no mass and all the simplices have the same mass. Denote by $m_{p}$ the common mass of the simplices of $\cT_{p}$, and by $d_{p}$ the supremum of the diameters of the simplices of $\cT_{p}$ (for some metric which is compatible with the topology on $M$).
Then  the sequences $(m_{p})$ and $(d_{p})$ tends to zero. 

Here is a useful consequence. Let $O$ be any open subset of $M$. We define inductively $\cO_{p}$ as the set of all the $n$-dimensional  open simplices of  $\cT_{p}$ that are included in $O$ but not in some $s \in \cO_{p-1}$. The elements of $\cO:= \cup \cO_{p}$ are pairwise disjoint and their closures cover $O$. Since the $(n-1)$-skeleton of our triangulations have no mass, we have the equality
$$
\mu(O) = \sum_{U \in \cO} \mu(U) \ \ \ (1).
$$
We call a (closed) simplex of some $\cT_{p}$ \emph{good} if it is a good ball in $M$. We notice that for every $p>0$, all the $n$-dimensional  simplices that are disjoint from the $(n-1)$-skeleton of $\cT_{0}$ are good\footnote{Note that there may be simplices in $\cT_{0}$ that fail to be good balls if $\cT_{0}$ is a triangulation but not a PL-triangulation.}. Thus equality (1) still holds if, in the definition of the $\cO_{p}$'s,  we replace the simplices by the simplices whose closure is good.
 As a consequence, if two probability measures $\mu,\mu'$ give the same mass to all the good simplices of $\cT_{p}$ for every $p$, then they are equal.

\bigskip

In the first Lemma we look for elements of the group $G_{f}$  that do not preserve the measure and have small support.
\begin{lemm}\label{lemma.small-support}
For every positive $\varepsilon$ there exists a good ball $B$ of measure less than $\varepsilon$ and an element $g \in G_{f}$ which is supported in $B$ and does not preserve the measure $\mu$.
\end{lemm}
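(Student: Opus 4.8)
The plan is to push the non-conservation of $\mu$ by $f$ into a small region by conjugation. Set $\nu = (f^{-1})_{*}\mu$; since $f$ does not preserve $\mu$ we have $\nu \neq \mu$. The key observation is that if $\phi \in \homeo_{0}(M,\mu)$ is supported in a set $B_{0}$, then $g := f\phi f^{-1} \in G_{f}$ is supported in $f(B_{0})$, and a direct computation with push-forwards gives $g_{*}\mu = f_{*}\phi_{*}\nu$, so that $g$ preserves $\mu$ if and only if $\phi$ preserves $\nu$. Thus it suffices to find a small good ball $B_{0}$ together with an element $\phi \in \homeo_{0}(M,\mu)$ supported in $B_{0}$ that does not preserve $\nu$. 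The smallness of the support of $g$ will follow by taking $B_{0}$ of small diameter around a well-chosen point $x_{0}$: then $f(B_{0})$ has small diameter, and since $\mu$ is non-atomic the measure $\mu(f(B_{0}))$ tends to $0$ as $B_{0}$ shrinks to $x_{0}$, so $\overline{f(B_{0})}$ can be enclosed in a good ball $B$ with $\mu(B) < \varepsilon$.

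To locate $B_{0}$, I would first show that $\nu$ fails to be proportional to $\mu$ on arbitrarily small balls around some point. Indeed, if every point had a neighbourhood on which $\nu = c\,\mu$ for some constant, then these local constants would agree on overlaps (both measures have full support) and, $M$ being connected, would patch into a single global constant; as $\mu$ and $\nu$ are both probability measures this constant is $1$, whence $\nu = \mu$, a contradiction. Hence there is a point $x_{0}$ admitting no neighbourhood of proportionality, and any small good ball $B_{0} \ni x_{0}$ carries $\nu \not\propto \mu$.

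The element $\phi$ is then produced from two test balls. I claim that inside $B_{0}$ there are two disjoint good balls $D_{1}, D_{2}$ with $\mu(D_{1}) = \mu(D_{2})$ but $\nu(D_{1}) \neq \nu(D_{2})$. Granting this, the consequence of the Oxtoby--Ulam theorem established above yields $\phi \in \homeo_{0}(M,\mu)$ supported in a good ball contained in $B_{0}$ and containing $D_{1} \cup D_{2}$, with $\phi(D_{1}) = D_{2}$; since $\nu(\phi(D_{1})) = \nu(D_{2}) \neq \nu(D_{1})$, this $\phi$ does not preserve $\nu$, and $g = f\phi f^{-1}$ is the desired element.

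I expect the claim of the previous paragraph to be the main obstacle, because of a boundary subtlety: although each good ball has $\mu$-negligible boundary, its boundary need not be $\nu$-negligible, so $\nu$ is not a priori determined by its values on good balls. I would circumvent this by working with \emph{bi-good} balls, that is, balls whose boundary is negligible for both $\mu$ and $\nu$; for any embedded family of concentric balls all but countably many radii give a bi-good ball, so such balls still form a basis and do determine both measures. Arguing by contradiction, if every pair of disjoint bi-good balls of equal $\mu$-mass inside $B_{0}$ had equal $\nu$-mass, then $\nu(B)$ would depend on $B$ only through $\mu(B)$, via an increasing function $G$. Cutting a bi-good ball, viewed in a Euclidean chart as a box, by a hyperplane $\{x_{1} = c\}$ for a.e. $c$ (so that the slice is null for both measures) splits it into two bi-good boxes and yields $G(\mu(B)) = G(a) + G(\mu(B) - a)$ for a dense set of values $a$; an increasing function satisfying Cauchy's relation on a dense set is linear, so $\nu = c\,\mu$ on $B_{0}$, contradicting non-proportionality. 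This furnishes the balls $D_{1}, D_{2}$ and completes the argument.
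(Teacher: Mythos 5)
Your reduction is correct, and it is in fact the paper's own mechanism read in the opposite direction: the paper takes $g=f^{-1}\phi f$ where $\phi\in\homeo_{0}(M,\mu)$ swaps two balls of equal $\mu$-mass whose $f$-preimages have unequal $\mu$-masses, and your observation that $f\phi f^{-1}$ preserves $\mu$ if and only if $\phi$ preserves $\nu=(f^{-1})_{*}\mu$ is the same computation; the smallness-of-support argument also matches the paper's. The genuine gap is in the one step that carries all the difficulty: producing two \emph{disjoint} balls $D_{1},D_{2}$ with $\mu(D_{1})=\mu(D_{2})$ and $\nu(D_{1})\neq\nu(D_{2})$. Your contradiction argument does not go through as written. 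First, the hypothesis you negate concerns \emph{disjoint} pairs, while the two halves of a cut box share a face; so before you can even define $G$ and write $G(\mu(B))=G(a)+G(\mu(B)-a)$, you must know that any two bi-good balls of equal $\mu$-mass --- overlapping or adjacent --- have equal $\nu$-mass. The natural fix (compare both to a third ball disjoint from each) requires producing, inside a prescribed open set, a bi-good ball of \emph{exactly} a prescribed $\mu$-mass, and this is precisely what can fail in the topological category: an embedded sphere may carry positive $\mu$-mass or positive $\nu$-mass, so for a concentric family the function $r\mapsto\mu(\gamma(B_{r}(0)))$ is increasing but may jump, and its range may miss the target mass. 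For the same reason the claimed monotonicity of $G$, and even the fact that $G$ is defined on an interval of masses (which your Cauchy-equation step needs), are unjustified. A smaller gap of the same nature: ``bi-good balls form a basis, hence determine both measures'' is not automatic, since balls are not stable under intersection and the $\pi$-$\lambda$ theorem does not apply; one needs a covering argument (e.g.\ Besicovitch in a chart, applied to $\mu+\nu$) or the paper's one-sided decomposition combined with equality of total masses.

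This mass-realization problem is exactly what the paper's triangulation machinery is built to kill. The triangulations $\cT_{p}$ are constructed (via Oxtoby--Ulam) so that the $(n-1)$-skeleton is $\mu$-null and \emph{all $n$-simplices have the same $\mu$-mass} $m_{p}$. Since measures are determined by good simplices (the paper's equality (1) plus the total-mass argument), $f_{*}\mu\neq\mu$ yields a good simplex $B_{1}$ with $\mu(f^{-1}(B_{1}))>\mu(B_{1})$; applying the same reasoning to $M\setminus B_{1}$ and subdividing yields a disjoint good simplex $B_{2}$ of the \emph{same generation} --- hence automatically of the same $\mu$-mass --- with the opposite inequality. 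The equal-mass pair you labor to build by hand thus comes for free, with no proportionality argument, no point $x_{0}$, and no functional equation. If you want to keep your local formulation, the honest repair is to transport $(B_{0},\mu|_{B_{0}})$ to a Euclidean ball with Lebesgue measure by Oxtoby--Ulam so that masses of round balls vary continuously; but at that point you have rebuilt a localized version of the device the paper already provides, and the detour through non-proportionality becomes unnecessary.
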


\begin{proof}
By hypothesis the probability measures $\mu$ and $f_{*}\mu$ are not equal. According to the discussion preceding the Lemma, there exists some $p>0$ and some simplex of the triangulation $\cT_{p}$ whose closure $B_{1}$ is a good ball, and such that $\mu(B_{1}) \neq \mu(f^{-1}(B_{1}))$. 
To fix ideas let us assume that 
$$
\mu(f^{-1}(B_{1})) > \mu(B_{1}).
$$ 
This implies  the same inequality for at least one of the simplices of $\cT_{p+1}$ that are included in $B_{1}$ ; thus, by induction, we see that we may choose $p$ to be arbitrarily large.
Note that we have $\mu(f^{-1}(M \setminus B_{1})) < \mu(M \setminus B_{1})$. 
Thus the same reasoning, applied to $M \setminus B_{1}$, provides a (closed) simplex $B_{2}$ of some $\cT_{p'}$, disjoint from $B_{1}$, such that
$$
\mu(f^{-1}(B_{2})) < \mu(B_{2}).
$$ 
Again, by induction, we may assume that $p'=p$ and this is an arbitrarily large integer. In particular $B_{1}$ and $B_{2}$ are good balls with the same mass.
Let $B'$ be a ball whose interior contains $B_{1}$ and $B_{2}$.
Since $B_{1}$ and $B_{2}$ have the same measure, by the above mentioned version of the Oxtoby-Ulam theorem  there exists $\phi \in \homeo_{0}(M,\mu)$ supported in $B'$ and sending $B_{1}$ onto $B_{2}$.
Now we consider the element 
 $$
g = f^{-1} \phi f
 $$
of the group $G_{f}$. It has support in the ball $B=f^{-1}(B')$.
It sends the ball $f^{-1}(B_{1})$ to the ball $f^{-1}(B_{2})$, and we have
$$
\mu(f^{-1}(B_{1})) > \mu(B_{1}) = \mu(B_{2}) > \mu(f^{-1}(B_{2}))
$$
so that $g$ does not preserve the measure $\mu$, as required by the Lemma. 

It remains to see that in the above construction we may have chosen $B$ to be a good ball of arbitrarily small measure. Since $\mu$ has no atom, for every $\varepsilon>0$ there exists some $\eta>0$ such that every subset of $M$ of diameter less than $\eta$ has measure less than $\varepsilon$. Thus by choosing $p=p'$ large enough we may require that 
$$
\mu(f^{-1}(B_{1})) +  \mu(f^{-1}(B_{2})) < \varepsilon.
$$
Then we choose $B$ as a ball whose interior contains $f^{-1}(B_{1})$ and $f^{-1}(B_{2})$ and which still has measure less than $\varepsilon$. Finally we shrink $B$ a little bit to turn it into a good ball. This completes the proof of the Lemma.
\end{proof}

We subdivide the euclidean unit ball $B_{1}(0)$ of $\bbR^n$ into the half-balls $B_{1}^- = B_{1}(0) \cap \{x \leq 0\}$ and $B_{1}^+ = B_{1}(0) \cap \{x \geq 0\}$. Let $\Sigma$ be the disk $B_{1}^- \cap B_{1}^+$ that separates the half-balls.
We consider a given ball $B$ and some  homeomorphism $g$ supported in $B$.
For every homeomorphism  $\gamma : B_{1}(0) \to B$ we let $\gamma^\pm = \gamma(B_{1}^\pm)$; we say that  $\gamma$ is \emph{thin} if $\gamma(\Sigma)$ has measure zero.  
We now consider the set  $\cI(\gamma,g)$ of all the numbers of the type   
 $$\mu(g(\gamma^+)) - \mu(\gamma^+)$$
where  $\gamma$ is thin.
\begin{lemm}\label{lemma.interval}
If $g$ does not preserve the measure $\mu$ then  $\cI(\gamma,g)$ contains an interval $[a^-,a^+]$ with $a^- < 0 < a^+$.
\end{lemm}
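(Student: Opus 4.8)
The plan is to work with the good probability measure $\nu:=(g^{-1})_{*}\mu$, which satisfies $\nu(A)=\mu(g(A))$ for every Borel set $A$. Since $g$ is supported in $B$ we have $\nu=\mu$ outside $B$, $\nu(B)=\mu(g(B))=\mu(B)$ and $\nu(\partial B)=\mu(\partial B)=0$; moreover $g$ fails to preserve $\mu$ exactly when the finite signed measure $\delta:=\nu-\mu$ is nonzero. With this notation the number attached to a thin $\gamma$ is
$$\mu(g(\gamma^{+}))-\mu(\gamma^{+})=\delta(\gamma^{+}),$$
so the statement amounts to proving that the set of values $\delta(\gamma^{+})$, as $\gamma$ ranges over thin $\gamma$, contains a neighbourhood of $0$.

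Call $\gamma$ \emph{doubly thin} if $(\mu+\nu)(\gamma(\Sigma))=0$; a doubly thin $\gamma$ is in particular thin. The first point is a complementarity relation: for doubly thin $\gamma$, since $\gamma^{+}\cup\gamma^{-}=B$, $\gamma^{+}\cap\gamma^{-}=\gamma(\Sigma)$, $\delta(B)=0$ and $\delta(\gamma(\Sigma))=0$, inclusion--exclusion gives $\delta(\gamma^{-})=-\delta(\gamma^{+})$. Letting $\rho$ denote the reflection $x\mapsto(-x_{1},x_{2},\dots,x_{n})$ of $B_{1}(0)$, which fixes $\Sigma$ and exchanges $B_{1}^{+}$ and $B_{1}^{-}$, the map $\gamma\rho$ is again doubly thin and $(\gamma\rho)^{+}=\gamma^{-}$, so $\delta((\gamma\rho)^{+})=-\delta(\gamma^{+})$. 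Consequently, once I exhibit a single doubly thin $\gamma$ with $\delta(\gamma^{+})=c\neq0$, both values $c$ and $-c$ are attained, so $\cI(\gamma,g)$ contains a positive and a negative number. To produce such a $\gamma$ I would first locate the defect: as $\delta\neq0$ vanishes outside $B$, it is nonzero on the interior of $B$, so some open $O$ inside $B$ has $\delta(O)\neq0$; applying the mass decomposition (1) to $\mu$ and to $\nu$ yields a good ball $B_{1}$ inside $B$ (a good simplex, shrunk if necessary so that $\partial B_{1}$ is $(\mu+\nu)$-null) with $\delta(B_{1})\neq0$. I would then pick two doubly thin half-ball decompositions whose separating walls agree outside a small neighbourhood of $B_{1}$ and pass on opposite sides of $B_{1}$ inside it, so that their positive halves differ exactly by $B_{1}$; the two values of $\delta(\gamma^{+})$ then differ by $\delta(B_{1})\neq0$, forcing at least one of them to be nonzero.

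It remains to fill in the interval, for which the key is that $\delta(\gamma^{+})$ depends continuously on $\gamma$ at doubly thin parameters. Indeed, if $\gamma_{k}\to\gamma$ uniformly with $\gamma$ doubly thin, then $\partial\gamma^{+}=\gamma(\Sigma)\cup\gamma(\partial B_{1}(0)\cap B_{1}^{+})$ is $(\mu+\nu)$-null (the outer face lies in $\partial B$, and $\nu(\partial B)=\mu(\partial B)=0$), whence $\mu(\gamma_{k}^{+})\to\mu(\gamma^{+})$ and $\nu(\gamma_{k}^{+})\to\nu(\gamma^{+})$ by the standard measure-convergence criterion for regions with null boundary. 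Thus $\gamma\mapsto\delta(\gamma^{+})$ is continuous on the set of doubly thin $\gamma$, and if that set is path-connected its image is an interval containing both $c$ and $-c$, hence the neighbourhood $[-|c|,|c|]$ of $0$; since doubly thin implies thin, this interval lies in $\cI(\gamma,g)$ and the lemma follows with $a^{-}=-|c|$, $a^{+}=|c|$.

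The main obstacle is precisely this connectedness. The natural path joining a $\gamma$ to its reflection $\gamma\rho$ rotates the wall $\gamma(\Sigma)$ through angle $\pi$, but the intermediate walls all contain the image of the rotation axis and need not be $(\mu+\nu)$-null, so countably many parameters may leave the doubly thin locus and create jumps of $\delta(\gamma^{+})$. I expect to remove this by a general-position argument: the doubly thin $\gamma$ are dense, since for a generic parallel translate of a wall the image is $(\mu+\nu)$-null by countable additivity over a family of pairwise disjoint walls; a Fubini argument applied to a two-parameter family of mutually disjoint perturbed walls should then let one push the rotation path into the doubly thin locus while keeping its endpoints, supplying the continuity needed for the intermediate value theorem.
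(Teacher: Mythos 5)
Your overall skeleton is the same as the paper's: produce one thin decomposition with nonzero defect, use a reflection/complementation symmetry to get the opposite value, and join the two by a continuous path of thin decompositions so that the intermediate value theorem fills in an interval around $0$. Within that skeleton several of your steps are correct and in one respect more careful than the paper's text: the signed-measure formulation $\delta=(g^{-1})_{*}\mu-\mu$, the identity $\delta(\gamma^{-})=-\delta(\gamma^{+})$ for doubly thin $\gamma$, the continuity of $\gamma\mapsto\delta(\gamma^{+})$ at doubly thin parameters, and the observation that the walls must be null for $(g^{-1})_{*}\mu$ and not only for $\mu$ (the paper's endpoint computation $\mu(g(\gamma^{-}))=1-\mu(g(\gamma^{+}))$ and its implicit continuity claim do require exactly your ``doubly thin'' condition). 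Your construction of the initial nonzero value (two walls passing on either side of a good ball $B_{1}$ with $\delta(B_{1})\neq0$) is only cosmetically different from the paper's (a slightly enlarged good ball with $\delta\neq 0$ joined to $\partial B$ by a thin tube).

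The gap is the step you yourself flag as the main obstacle, and your proposed repair would not close it. The intermediate value theorem needs continuity at \emph{every} parameter of some path from $\gamma$ to $\gamma\rho$, whereas general-position and Fubini arguments can only deliver ``all but countably many'' or ``almost every'' good parameters. In the rotation family the walls $\gamma(R_{t}(\Sigma))$ are not pairwise disjoint --- they all contain the image of the rotation axis --- so even countability of the bad set of $t$ requires first making the axis null; and after a transverse perturbation by an auxiliary parameter $s$, disjointness holds in the $s$-direction but not in the $t$-direction, so Fubini yields at best that for almost every $s$ the bad set of $t$ is null, not empty. That is not enough: at a single bad $t_{0}$ the function $t\mapsto\delta(\gamma_{t}^{+})$ genuinely jumps, by up to the $(\mu+\nu)$-mass of the bad wall, so it can pass from $c$ to $-c$ without taking any intermediate value, and the conclusion of the lemma does not follow. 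The paper's resolution, which is the one idea missing from your proposal, is to force \emph{all} walls of the path to be null simultaneously: before rotating, replace $\gamma$ by $\gamma'=\phi\circ\gamma$, where $\phi:B\to B$ is produced by the Oxtoby--Ulam theorem applied separately to the two halves $\gamma^{+}$ and $\gamma^{-}$ (so that $\phi$ fixes $\gamma(\Sigma)$, preserves each half, hence preserves the nonzero defect) and makes the image under $\gamma'$ of the Lebesgue measure \emph{equivalent} to $\mu|_{B}$ --- in your setting, to $(\mu+\nu)|_{B}$, which is legitimate because your initial $\gamma$ is doubly thin and $(\mu+\nu)(\partial B)=0$. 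Then $\gamma'(R_{t}(\Sigma))$ is $(\mu+\nu)$-null for every $t$ simply because $R_{t}(\Sigma)$ is Lebesgue-null, the entire rotation path lies in the doubly thin locus, and your continuity-plus-IVT argument finishes the proof. With that substitution in place of your last paragraph the proof is complete; without it, it is not a proof.
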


\begin{proof}
First we want to prove that there exists some $\gamma:B_{1}(0) \to B$ which is thin and such that $\mu(g(\gamma^+)) \neq \mu(\gamma^+)$.
Since $g$ does not preserve the measure $\mu$, we may find some good ball $b$ in the interior of $B$ such that $\mu(b) \neq \mu(f^{-1}(b))$. To fix ideas we assume that 
$\mu(b) < \mu(f^{-1}(b))$. Thanks to the Oxtoby-Ulam theorem we may identify $B$ with a euclidean ball in $\bbR^n$, $b$ with another euclidean ball inside $B$, and $\mu$ with the restriction of the  Lebesgue measure on $\bbR^n$. All our balls are centered at the origin. Let $b'$ be a ball slightly greater than $b$, and $T$ be a thin tube in $B \setminus b'$ connecting the boundary of $B$ and that of $b'$. There exists a homeomorphism $\gamma : B_{1}(0) \to B$ such that $\gamma^+ = T \cup b'$.
The construction may be done so that the (Lebesgue) measure of $\gamma^+$ is arbitrarily close to that of $b$, and then we have $\mu(\gamma^+)) < \mu(g^{-1}(\gamma^+))$, as wanted.

We can find  a continuous family  $(R_{t})_{t\in [0,1]}$ of rotations of $B_{1}(0)$ such that $R_{0}$ is the identity and $R_{1}$ is a rotation that exchanges $B_{1}^-$ and $B_{1}^+$.  Setting $\gamma_{t}:=\gamma \circ R_{t}$, we have  $\gamma_{1}^+ = \gamma_{0}^- = \gamma^-$. Note that it may happen that $\gamma_{t}(\Sigma)$ has positive measure for some $t$. To remedy for this we consider $\gamma'=\phi \circ \gamma$, where $\phi: B \to B$ is a homeomorphism that fixes $\gamma(\Sigma)$, such that the image under $\gamma'$ of the Lebesgue measure on $B_{1}(0)$ is equivalent to the restriction of $\mu$ to the ball $B$, in the sense that both measures share the same measure zero sets; such a $\phi$ is provided by the Oxtoby-Ulam theorem. This ensures that $\gamma'_{t}:=\gamma' \circ R_{t}$ is thin for every $t$. Note that ${\gamma'_{0}}^\pm = \gamma_{0}^\pm$ and ${\gamma'_{1}}^\pm = \gamma_{1}^\pm$. We have
\begin{eqnarray*}
\mu(g({\gamma'_{1}}^+)) - \mu({\gamma'_{1}}^+) &  =  & \mu(g({\gamma'_{0}}^-)) - \mu({\gamma'_{0}}^-) \\
& =&  (1- \mu(g({\gamma'_{0}}^+))) - (1- \mu({\gamma'_{0}}^+)) \\
& = & - (\mu(g({\gamma'_{0}}^+)) - \mu({\gamma'_{0}}^+)) \neq 0.
\end{eqnarray*}
Thus the set $\cI(\gamma,g)$ contains the interval 
$$
\{\mu(g({\gamma'_{t}}^+)) - \mu({\gamma'_{t}}^+), t \in [0,1]\}
$$
which contains both a positive and a negative number, as required by the lemma.
\end{proof}

\begin{coro}\label{coro.transfer}
Let $\gamma_{0} : B_{1}(0) \to M$ be a topological embedding in $M$ with $\mu(\gamma_{0}(\Sigma))=0$, let $B_{0} = \gamma_{0}(B_{1}(0))$, and let $\varepsilon>0$ be less than the measure of  $\gamma_{0}^+$.
Then there exists some element $g \in G_{f}$, supported in $B_{0}$, such that
$$
\mu(g(\gamma_{0}^+)) = \mu(\gamma_{0}^+) - \varepsilon.
$$
\end{coro}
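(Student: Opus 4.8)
The plan is to manufacture the prescribed reduction $\varepsilon$ by iterating, inside $B_0$, the small measure-transfer furnished by the two previous lemmas, using measure-preserving homeomorphisms both to position each transfer and to reset the configuration between two consecutive transfers. First I would fix, once and for all, a good ball $B$ and an element $g_0 \in G_f$ supported in $B$ that does not preserve $\mu$ (Lemma~\ref{lemma.small-support}), and apply Lemma~\ref{lemma.interval} to $g_0$ to obtain an interval $[a^-,a^+]$ with $a^-<0<a^+$ of achievable values $\mu(g_0(\gamma^+))-\mu(\gamma^+)$ over thin embeddings $\gamma:B_1(0)\to B$. Set $\delta:=-a^->0$; this is the amount by which a single, suitably placed conjugate of $g_0$ will be able to shrink a half-ball. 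The point of the whole construction is that, by \emph{reusing} this one gate, the size of $\delta$ is irrelevant: choosing an integer $N$ with $\varepsilon/N\le\delta$, the goal becomes to perform $N$ successive shrinkings, each of size exactly $\varepsilon/N$, whose total is $\varepsilon$ no matter how small $\delta$ happens to be.

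The core is the following transfer step, which I would establish first. Suppose that at some stage the image of $\gamma_0^+$ is a good half-ball, \ie a set $A=\beta^+$ for a thin embedding $\beta:B_1(0)\to B_0$ with $\beta^+=A$, $\beta^-=B_0\setminus A$ and $\mu(\beta(\Sigma))=0$, and suppose $0<\mu(A)<\mu(B_0)$. Then for every $r\in[0,\delta]$ there is $c\in G_f$ supported in $B_0$ with $\mu(c(A))=\mu(A)-r$. To build $c$, pick by Lemma~\ref{lemma.interval} a thin $\gamma_*:B_1(0)\to B$ with $\mu(g_0(\gamma_*^+))-\mu(\gamma_*^+)=-r$, then choose $\psi\in\homeo_{0}(M,\mu)$ sending $B$ onto a small good ball straddling the disk $\beta(\Sigma)$, in such a way that $\psi(\gamma_*(\Sigma))=\psi(B)\cap\beta(\Sigma)$, $\psi(\gamma_*^+)=\psi(B)\cap A$ and $\psi(\gamma_*^-)=\psi(B)\cap\beta^-$; such a $\psi$ is produced by the Oxtoby-Ulam theorem applied separately on the two sides of $\beta(\Sigma)$, exactly as in the Oxtoby-Ulam paragraph above. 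Setting $c:=\psi g_0\psi^{-1}\in G_f$, supported in $\psi(B)\subseteq B_0$, and using that $\psi$ preserves $\mu$ and is the identity outside $\psi(B)$, one computes $\mu(c(A))-\mu(A)=\mu(g_0(\gamma_*^+))-\mu(\gamma_*^+)=-r$, as wanted.

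I would then run this step $N$ times. After producing $c$ as above I post-compose with a reset $\sigma\in\homeo_{0}(M,\mu)$ supported in $B_0$ carrying the new image $c(A)$ back to a standard good half-ball of the same measure $\mu(A)-r$; since $\sigma$ preserves $\mu$ this leaves the measure unchanged but restores the hypothesis of the transfer step, effectively recharging the gate for the next iteration. Starting from $A_0=\gamma_0^+$ and applying the step with $r=\varepsilon/N$ at each stage, the composition $g=\sigma_N c_N\cdots\sigma_1 c_1$ lies in $G_f$, is supported in $B_0$, and satisfies $\mu(g(\gamma_0^+))=\mu(\gamma_0^+)-N\cdot(\varepsilon/N)=\mu(\gamma_0^+)-\varepsilon$. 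Throughout the iteration the intermediate half-balls have measures in $[\mu(\gamma_0^+)-\varepsilon,\mu(\gamma_0^+)]$, which stays strictly between $0$ and $\mu(B_0)$ by the hypothesis $\varepsilon<\mu(\gamma_0^+)$, so both sides of the separating disk always have positive measure and there is room to insert the straddling ball.

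The part I expect to be delicate is not the bookkeeping of measures but the two topological constructions hidden in the step: producing the measure-preserving placement $\psi$ that straddles $\beta(\Sigma)$ while matching the three prescribed pieces, and producing the reset $\sigma$ returning an arbitrary homeomorphic image of a half-ball to standard position. Both require the images to remain \emph{good} — locally flat, with a measure-zero separating disk — so that the Oxtoby-Ulam theorem and the annulus theorem apply; guaranteeing this regularity at every stage (and already for $\gamma_0(\Sigma)$ itself, whose only assumed property is $\mu(\gamma_0(\Sigma))=0$) is where the real care concentrates. An alternative that hides the explicit resets is to phrase the argument as an induction showing that the set of achievable reductions is a relatively open subinterval of $[0,\mu(\gamma_0^+))$ containing $0$, hence all of it; but this rests on the very same transfer step and the same control of the regularity of the intermediate images.
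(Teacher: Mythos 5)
Your proposal is correct and follows essentially the same route as the paper's proof: both take the small non-preserving element given by Lemma~\ref{lemma.small-support}, use the interval of Lemma~\ref{lemma.interval} to calibrate the transfer to exactly $\varepsilon/N$, conjugate by an Oxtoby--Ulam measure-preserving placement that sends one half of the support ball into the current image of $\gamma_{0}^+$ and the other half into the image of $\gamma_{0}^-$, and compose $N$ such steps. The only differences are cosmetic: the paper iterates directly with the evolving embedding $\gamma_{k}=g_{k}\cdots g_{1}\circ\gamma_{0}$ rather than inserting your measure-preserving resets $\sigma_{k}$ (which change nothing, since a reset followed by the next placement is just another placement, and a measure-preserving reset cannot restore thinness anyway), and both arguments rest on the same implicit point you flag, namely that the separating set keeps measure zero along the iteration.
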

In the situation of the corollary we will say that $g$ \emph{transfers a mass $\varepsilon$ from $\gamma_{0}^+$ to $\gamma_{0}^-$}. 

\begin{proof}
Lemma~\ref{lemma.small-support} provides some element $g' \in G_{f}$ that does not preserve the measure $\mu$, and which is supported on a good ball $B$ whose measure is less than the minimum of $\mu(\gamma_{0}^+)-\varepsilon$ and $\mu(\gamma_{0}^-)$.
Then Lemma~\ref{lemma.interval} provides some homeomorphism $\gamma:B_{1}(0) \to B$ which is thin and such that 
$g'$ transfers some mass $a$ from $\gamma^+$ to $\gamma^-$:
$$
\mu(g'(\gamma^+))-\mu(\gamma^+) = a.
$$
Since such a number $a$ may be chosen freely in an open interval containing $0$, we may assume that $a = \frac{\varepsilon}{N}$ for some positive integer $N$.
Choose some homeomorphism $\Phi_{1} \in \homeo_{0}(M,\mu)$ that sends $B$ inside $B_{0}$, $\gamma^+$ inside $\gamma_{0}^+$ and $\gamma^-$ inside $\gamma_{0}^-$. Such a $\Phi_{1}$ is provided by Oxtoby-Ulam theorem, thanks to the fact that we have chosen the measure of $B$ to be small enough and that $\mu(\gamma(\Sigma)) = \mu(\gamma_{0}(\Sigma))=0$.
Now the conjugate $g_{1} = \Phi_{1} g' \Phi_{1}^{-1}$ transfers a mass $a$ from $\gamma_{0}^+$ to $\gamma_{0}^-$:
\begin{eqnarray*}
\mu(g_{1}(\gamma_{0}^+)) & = & \mu(\gamma_{0}^+)-a.
\end{eqnarray*}
We repeat the process with $\gamma_{1} = g_{1} \circ \gamma_{0}$ instead of $\gamma_{0}$, getting an element $g_{2} \in G_{f}$ that tranfers a mass $a$ from $\gamma_{1}^+$ to $\gamma_{1}^-$:
\begin{eqnarray*}
\mu(g_{2}g_{1}(\gamma_{0}^+)) & = & \mu(g_{2}(\gamma_{1}^+)) \\
& = & \mu(\gamma_{1}^+)-a \\
& = & \mu(g_{1}(\gamma_{0}^+))-a \\
& = & \mu(\gamma_{0}^+)-2a. \\
\end{eqnarray*}
We repeat the process $N$ times, and get the final homeomorphism $g$ as a composition  of the $N$ homeomorphisms $g_{N}, \dots , g_{1}$.
\end{proof}

\section{Proof of the theorem}

We consider as before some element $f \in \homeo_{0}(M) \setminus  \homeo_{0}(M,\mu)$. Let $g$ be some other element in $\homeo_{0}(M)$.
In order to prove the theorem  we want to approximate $g$ with some element in  the group $G_{f}$ generated by $f$ and $\homeo_{0}(M,\mu)$.
 We fix a triangulation $\cT_{0}$ for which the $(n-1)$-skeleton has zero measure. The first step of the proof consists in finding an element $g' \in G_{f}$ satisfying the following property:
\emph{for every simplex $s$ of $\cT_{0}$, the measure of $g'(s)$ coincides with the measure of $g^{-1}(s)$}. To achieve this, the (very natural) idea is to use corollary~\ref{coro.transfer} to progressively transfer some mass from the simplices $s$ whose mass is larger than the mass of their image under $g^{-1}$, to those for which the opposite holds.

Here are some details. Given a triangulation $\cT$  for which the $(n-1)$-skeleton has zero measure, we choose two $n$-dimensional  simplices $s, s'$ of $\cT$, and some positive $\varepsilon$ less than $\mu(s)$; let us explain how to transfer a mass $\varepsilon$ from $s$ to $s'$. First assume that $s$ and $s'$ are adjacent.
Then  we may choose an embedding $\gamma : B_{1}(0) \to s \cup s'$ with $\gamma(\Sigma) \subset s \cap s'$,  $\gamma^+ \subset s$ and $\gamma^- \subset s'$, and we apply corollary~\ref{coro.transfer}. Thus we get an  element $h \in G_{f}$, supported in $s \cup s'$, such that $\mu(h(s)) = \mu(s)-\varepsilon$, and consequently $\mu(h(s')) = \mu(s')+\varepsilon$. Now consider the general case, when $s$ and $s'$ are not adjacent. Since $M$ is connected, there exists a sequence $s_{0} = s, \dots , s_{\ell} = s'$ of simplices of $\cT$ in which two successive elements are adjacent. As described before we may transfer mass $\varepsilon$ from $s_{0}$ to $s_{1}$, then from $s_{1}$ to $s_{2}$, and so on. Thus by successive adjacent transfers of mass we get some element in $h \in G_{f}$ that transfers mass $\varepsilon$ from $s$ to $s'$. Note that the masses of all the other elements do not change, that is, $\mu(h(\sigma)) = \mu(\sigma)$ for every simplex $\sigma$ of $\cT$ different from $s$ and $s'$.

 Now we go back to our triangulation $\cT_{0}$, and we construct $g'$ the following way. If each simplex $s$ has the same measure as its inverse image $g^{-1}(s)$ then there is nothing to do. In the opposite case there exists some simplex $s$ of $\cT_{0}$ such that $\mu(s) > \mu(g^{-1}(s))$. We also select some other simplex $s'$ such that $\mu(s') \neq \mu(g^{-1}(s'))$, and we use the previously described construction of a homeomorphism $g_{1} \in G_{f}$  that transfers the mass $\mu(s)-\mu(g^{-1}(s))$ from the simplex $s$ to the simplex $s'$. After doing so the number of simplices $g_{1}(s) \in {g_{1}}_{*} \cT_{0}$ whose mass differs from the mass of $g^{-1}(s)$ has decreased by at least one compared to $\cT_{0}$. We proceed recursively until we get an element $g' \in G_{f}$ such that $\mu(g'(s)) = \mu(g^{-1}(s))$ for every simplex $s$ in $\cT_{0}$, as wanted for this first step.

For the second and last step we consider the triangulations $(g^{-1})_{*}(\cT_{0})$ and $g'_{*}(\cT_{0})$. The homeomorphism $g'g$ sends the first one to the second one, and each simplex $g^{-1}(s) \in (g^{-1})_{*}(\cT_{0})$ has the same measure as its image $g'(s) \in g'_{*}(\cT_{0})$. We apply Oxtoby-Ulam theorem independently on each $g'(s)$ to get a homeomorphism $\Phi_{s} : g'(s) \to g'(s)$, which is the identity on $\partial g'(s)$, and which sends the measure $(g'g)_{*} (\mu_{\mid g^{-1}(s)})$ to the measure $\mu_{\mid g'(s)}$. The homeomorphism
$$
\Phi:=  \left( \prod_{s} \Phi_{s} \right) g'g
$$
preserves the measure $\mu$. Furthermore by Alexander's trick each $\Phi_{s}$ is isotopic to the identity, thus $\Phi$ is isotopic to the identity, and belongs to the group  $ \homeo_{0}(M,\mu)$.
Now the homeomorphism $g''=g'^{-1}\Phi$ belongs to the group $G_{f}$ and for each  simplex $s$  of the triangulation $\cT_{0}$ we have
$g''^{-1}(s) = g^{-1}(s)$. We may have chosen the triangulation $\cT_{0}$ so that each simplex has diameter less than some given $\varepsilon$. Every point $x$ in $M$ belongs to some $n$-dimensional  closed simplex $g^{-1}(s)$ of the triangulation $(g^{-1})_{*}\cT_{0}$, and since both $g(x)$ and $g''(x)$ belong to $s$ they are a distance less than $\varepsilon$ apart. In other words the uniform distance from $g$ to $g''$ is less than $\varepsilon$. This proves that $g$ belongs to the closure of $G_{f}$, and completes the proof of the theorem.

\bibliographystyle{alpha}
\bibliography{bibliographie}

\end{document}